\newcommand{\PP}{\mathbb{P}}
\newcommand{\C}{\mathbb{C}}
\newcommand{\M}{\mathcal{M}}
\newcommand{\CC}{\mathcal{C}}
\newcommand{\K}{\mathbb{K}}
\newcommand{\Kvars}{\K[x_1,\ldots,x_n]}
\DeclareMathOperator{\rank}{rank}
\newtheorem{thm}{Theorem}[section]
\newtheorem{defn}[thm]{Definition}
\newtheorem{prop}[thm]{Proposition}
\newtheorem{lemma}[thm]{Lemma}
\newtheorem{remark}[thm]{Remark}
\newtheorem{example}[thm]{Example}
\newcommand{\doslineas}[2]{\genfrac{}{}{0pt}{}{#1}{#2}}
\title[Interpolation of Ideals]{Interpolation of Ideals}
\author[M. Avendano]{Martin Avendano}
\address{Centro Universitario de la Defensa-IUMA\\ 
Academia General
Militar\\ 
Ctra. de Huesca s/n\\ 
50090, Zaragoza, Spain} 
\email{avendano@unizar.es, jortigas@unizar.es}
\author[J. Ortigas]{Jorge Ortigas-Galindo$^1$}
\address{} 
\email{}
\begin{document}

\thanks{Second author is partially supported by the Spanish Ministry of Education MTM2010-21740-C02-02.}  
\keywords{Polynomial interpolation, ideals, complexity.}
\subjclass[2010]{14Q15, 13P99.}  
\footnotetext[1]{Corresponding author. Email: jortigas@unizar.es Phone: +34 976739636 Fax: +34 976739824.}

\begin{abstract}
Let $\K$ denote an algebraically closed field.
We study the relation between an ideal $I\subseteq\Kvars$ and its cross
sections $I_\alpha=I+\langle x_1-\alpha\rangle$. In particular, we study under
what conditions $I$ can be recovered from the set $I_S=\{(\alpha,I_\alpha)\,:\,\alpha\in S\}$
with $S\subseteq\K$. For instance, we show that an ideal $I=\bigcap_iQ_i$, where $Q_i$ is
primary and $Q_i\cap\K[x_1]=\{0\}$, is uniquely determined by $I_S$ when $|S|=\infty$.
Moreover, there exists a function $B(\delta,n)$ such that, if $I$ is generated by polynomials
of degree at most~$\delta$, then $I$ is uniquely determined by $I_S$ when $|S|\geq B(\delta,n)$.
If $I$ is also known to be principal, the reconstruction can be done when $|S|\geq 2\delta$,
and in this case, we prove that the bound is sharp.
\end{abstract}

\maketitle

\section{Introduction}

Throughout this paper $\K$ will be an algebraically closed field.
The main result in the theory of univariate polynomial interpolation states that
for any given $d+1$ points $\{(\alpha_i,\beta_i)\in \K^2\;:\,i=1,\ldots,d+1\}$ there
exists a unique polynomial $p\in \K[x]$ of degree bounded by $d$ such that $f(\alpha_i)=\beta_i$
for $i=1,\ldots,d+1$. The uniqueness part of this statement says that a planar
algebraic curve $\mathcal{C}=\{(x,y)\in \K^2\;:\;y=p(x)\}$ of degree bounded by $d$ is uniquely
determined by its intersection with $d+1$ parallel lines $\{x=\alpha_i\}$. In this paper we
study generalizations of this fact to higher dimensions, i.e. we study under what conditions
it is possible to recover an algebraic variety $V\subseteq \K^n$ from its intersection with parallel
hyperplanes. We also consider the algebraic
counterpart of the problem, i.e. under what conditions it is possible to recover an ideal
$I\subseteq \Kvars$ from some cross sections $I+\langle x_1-\alpha\rangle$. Our first result
studies the simplest situation of all, that is, when all the cross sections are known:

\begin{thm}\label{thm-1}
Let $I\subseteq \Kvars$ be an ideal. Then:
\begin{itemize}
 \item[(a)] $\displaystyle\sqrt{I}=\bigcap_{\alpha\in\K}\sqrt{I+\langle x_1-\alpha\rangle}$.
 \item[(b)] $\displaystyle I=\bigcap_{\alpha\in \K}\bigcap_{k\geq 1}I+\langle x_1-\alpha\rangle^k$.
\end{itemize}
\end{thm}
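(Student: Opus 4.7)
For part (a), I would apply Hilbert's Nullstellensatz. Since $V(I+\langle x_1-\alpha\rangle) = V(I)\cap\{x_1=\alpha\}$, the Nullstellensatz identifies $\sqrt{I+\langle x_1-\alpha\rangle}$ with the ideal of polynomials vanishing on $V(I)\cap\{x_1=\alpha\}$. Intersecting over all $\alpha\in\K$ yields the ideal of polynomials vanishing on $\bigcup_{\alpha\in\K}(V(I)\cap\{x_1=\alpha\}) = V(I)$, which is $\sqrt{I}$.

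For part (b), the inclusion from left to right is trivial, so I would focus on the reverse. My plan is to reduce to a single primary component via primary decomposition: write $I = \bigcap_i Q_i$ with $P_i=\sqrt{Q_i}$, and show that the right-hand side is contained in each $Q_i$. Fix $i$; by the weak Nullstellensatz $V(P_i)\neq\emptyset$, so I would pick a point of $V(P_i)$ and take $\alpha$ to be its first coordinate; then $x_1-\alpha$ lies in the maximal ideal corresponding to that point, and in particular is a non-unit in $\Kvars/P_i$. Now let $f$ be any element of the right-hand side. Its image $\bar f$ in the Noetherian ring $\Kvars/Q_i$ lies in every power of the image of $\langle x_1-\alpha\rangle$, so by Krull's intersection theorem there exists $a$ in that image with $(1-a)\bar f = 0$. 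If $\bar f\neq 0$, then $1-a$ is a zero-divisor in $\Kvars/Q_i$; since $Q_i$ is $P_i$-primary, every zero-divisor of $\Kvars/Q_i$ lies in $P_i/Q_i$. Writing $a=(x_1-\alpha)h$, this forces $(x_1-\alpha)h\equiv 1\pmod{P_i}$, contradicting the non-unit choice of $x_1-\alpha$. Hence $\bar f=0$, i.e., $f\in Q_i$; intersecting over $i$ gives $f\in I$.

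The main obstacle is choosing, for each primary component $Q_i$, an $\alpha$ for which Krull's theorem produces genuine vanishing rather than a tautology; the Nullstellensatz resolves this cleanly by turning the requirement ``$x_1-\alpha$ is a non-unit in $\Kvars/P_i$'' into the geometric statement that $\alpha$ is the first coordinate of some point of $V(P_i)$. Everything else is standard: primary decomposition, Krull's intersection theorem, and the identification of the zero-divisors of a primary quotient with its nilradical.
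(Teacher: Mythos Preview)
Your proof of part (a) is correct and essentially identical to the paper's: both invoke the Nullstellensatz to identify $\sqrt{I+\langle x_1-\alpha\rangle}$ with the vanishing ideal of $V(I)\cap\{x_1=\alpha\}$, and then observe that the union of these slices over all $\alpha\in\K$ recovers $V(I)$. The paper writes this out pointwise (pick $p\in V(I)$, evaluate $f^k=g+(x_1-p_1)q$ at $p$), but the content is the same.

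Your proof of part (b) is also correct, but takes a genuinely different and more economical route than the paper. Both arguments begin by reducing to a single primary component $Q_i$ via primary decomposition. From there the paper splits into two cases: if $Q_i\cap\K[x_1]\neq\{0\}$ the claim is immediate; otherwise it proves an auxiliary identity $J=\bigcap_{p\in\K[x_1]\setminus\{0\}}(Q_i+\langle p\rangle J)$ for the intersection $J$, localizes this at an arbitrary maximal ideal $\M$, applies Nakayama's lemma to obtain $J_\M=(Q_i)_\M$, and concludes by the local--global principle. You instead choose, for each $Q_i$, a \emph{single} value $\alpha$ (the first coordinate of any point of $V(P_i)$, which exists by the weak Nullstellensatz) and apply Krull's intersection theorem directly in the Noetherian ring $\Kvars/Q_i$, using only the fact that zero-divisors in a primary quotient lie in its nilradical $P_i/Q_i$. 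This bypasses the case split, the auxiliary identity, and the entire localization argument. The paper's route is heavier but makes the local structure at every maximal ideal explicit; yours is shorter and isolates exactly the one $\alpha$ that matters for each component.
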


For radical ideals $I\subseteq \Kvars$, Theorem~\ref{thm-1}(a) implies that
$I=\bigcap_{\alpha\in \K}I+\langle x_1-\alpha\rangle$, since:
\begin{equation}\label{eq_all}
I\subseteq\bigcap_{\alpha\in \K}I+\langle x_1-\alpha\rangle=\bigcap_{\alpha\in \K}\sqrt{I}+\langle x_1-\alpha\rangle
\subseteq\bigcap_{\alpha\in \K}\sqrt{I+\langle x_1-\alpha\rangle}\;\overset{\ref{thm-1}(a)}{=}\;\sqrt{I}=I. 
\end{equation}
However, this reconstruction formula is not valid for general ideals: $I=\langle xy\rangle$
and $J=\langle x^2y\rangle$ are two different ideals of $\K[x,y]$ that have exactly
the same cross sections $I+\langle x-\alpha\rangle=J+\langle x-\alpha\rangle$ for all $\alpha\in \K$.
Theorem~\ref{thm-1}(b) shows that this problem can be avoided by including powers of the ideals
$\langle x_1-\alpha\rangle$. Informally speaking, these powers account for the multiplicities hidden
in $I$ that are not visible geometrically in $V(I)$.

\bigskip

Our second result corresponds with the situation where infinitely many cross sections are known, i.e.
the problem of recovering an ideal $I$ from the set $I_S=\{(\alpha, I+\langle x_1-\alpha\rangle)\;:\;\alpha\in S\}$
where $S\subseteq \K$ is infinite. In this case, only varieties with no irreducible component included in
a hyperplane $\{x_1=\alpha\}$ can be reconstructed. These varieties, as we show in section~\ref{sec2}, correspond
exactly with those given by ideals in good position according to the following definition: 

\begin{defn}
Let $I\subseteq \Kvars$ be an ideal. We say that $I$ is in good position geometrically (with
respect to the variable $x_1$) if
$\sqrt{I}=\bigcap_{i=1}^rP_i$ for some prime ideals $P_i$ such that $P_i\cap \K[x_1]=\{0\}$. Similarly,
we say that $I$ is in good position algebraically (w.r.t.~$x_1$) if $I=\bigcap_{i=1}^rQ_i$ for some primary ideals
$Q_i$ such that $Q_i\cap \K[x_1]=\{0\}$.
\end{defn}

The notions defined above seem to be very restrictive, but this is not necessarily the case: any ideal
whose variety has no zero-dimensional component can be rotated with a suitable linear change of variables
in such a way that the resulting ideal is in good position geometrically. Similarly, ideals with no
embedded zero-dimensional component can be put in good position algebraically through a linear change
of coordinates.

\begin{thm}\label{thm-2}
Let $I\subseteq \Kvars$ be an ideal and let $S\subseteq \K$ be an infinite set. Then:
\begin{itemize}
\item[(a)] $I$ is in good position geometrically (w.r.t.~$x_1$)
 $\displaystyle\Longrightarrow\sqrt{I}=\bigcap_{\alpha\in S}\sqrt{I+\langle x_1-\alpha\rangle}$.
\item[(b)] $I$ is in good position algebraically (w.r.t.~$x_1$)
 $\displaystyle\Longrightarrow I=\bigcap_{\alpha\in S}I+\langle x_1-\alpha\rangle$.
\end{itemize}
\end{thm}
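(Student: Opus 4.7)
The easy containments $\sqrt I \subseteq \bigcap_{\alpha\in S}\sqrt{I+\langle x_1-\alpha\rangle}$ in (a) and $I \subseteq \bigcap_{\alpha\in S}(I+\langle x_1-\alpha\rangle)$ in (b) hold without any hypothesis, so the work is in the reverse inclusions. My plan treats the two parts by rather different methods: (a) reduces to a dimension count on generic fibres, while (b) relies on Grothendieck's generic freeness theorem applied to a finitely generated module over the polynomial ring $\K[x_1]$.

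For part (a), I first reduce to the prime case. Using the Nullstellensatz together with the identity $V(J_1\cap J_2)=V(J_1)\cup V(J_2)$, one checks that $\sqrt{I+\langle x_1-\alpha\rangle} = \bigcap_i \sqrt{P_i+\langle x_1-\alpha\rangle}$ for the given decomposition $\sqrt I=\bigcap_i P_i$. Exchanging the order of intersections, it suffices to prove $P = \bigcap_{\alpha\in S}\sqrt{P+\langle x_1-\alpha\rangle}$ for a single prime $P$ with $P\cap\K[x_1]=\{0\}$, which the Nullstellensatz again converts into the geometric assertion that $V(P)\cap\pi^{-1}(S)$ is Zariski dense in $V(P)$, where $\pi\colon\K^n\to\K$ is projection onto $x_1$. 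The hypothesis makes $\pi|_{V(P)}$ dominant, so $\pi(V(P))$ is cofinite in $\K$ and $S\cap\pi(V(P))$ is infinite. Density then follows by a fibre-dimension count: for cofinitely many $\alpha$, the fibre $V(P)\cap\pi^{-1}(\alpha)$ has dimension $\dim V(P)-1$, whereas any closed proper $W\subsetneq V(P)$ has components that either project into a finite subset of $\K$ or dominate $\K$ with generic fibre of dimension at most $\dim V(P)-2$, so $W$ cannot contain $V(P)\cap\pi^{-1}(\alpha)$ for generic $\alpha\in S$.

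For part (b), the hypothesis passes from each primary component $Q_i$ to its radical $P_i=\sqrt{Q_i}$, giving $P_i\cap\K[x_1]=\{0\}$ for every associated prime of $A:=\Kvars/I$. No nonzero element of $\K[x_1]$ can then be a zero divisor in $A$, so $A$ is a torsion-free $\K[x_1]$-module. I would now invoke generic freeness on the finitely generated $\K[x_1]$-algebra $A$ (with itself as module): there exists $h\in\K[x_1]\setminus\{0\}$ such that $A[1/h]$ is a free $\K[x_1][1/h]$-module, with some basis $\{e_i\}$. Given $f\in\bigcap_{\alpha\in S}(I+\langle x_1-\alpha\rangle)$, the class $\bar f\in A$ satisfies $\bar f\in(x_1-\alpha)A$ for every $\alpha\in S$. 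Expanding $\bar f=\sum c_i e_i$ in $A[1/h]$ and restricting to the still-infinite set $S':=\{\alpha\in S:h(\alpha)\neq 0\}$, the coprimality of $x_1-\alpha$ and $h$ in $\K[x_1]$ forces $(x_1-\alpha)\mid c_i$ in $\K[x_1]$ for every $i$ and every $\alpha\in S'$. Each $c_i$ thus acquires infinitely many distinct linear divisors in the PID $\K[x_1]$ and must vanish, so $\bar f=0$ in $A[1/h]$. Hence $h^N\bar f=0$ in $A$ for some $N$, and since $h^N$ is a non-zero-divisor by torsion-freeness, $\bar f=0$, i.e., $f\in I$.

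The main obstacle I anticipate is locating and correctly deploying generic freeness in (b). A more naive attempt through Noether normalization and the minimal polynomial of $\bar f$ over $\mathrm{Frac}(R)$ handles the case when $\bar f$ is a non-zero-divisor of $A$ (the constant term of the minimal polynomial can be forced to lie in $\bigcap_{\alpha\in S}(x_1-\alpha)R$, hence to vanish), but it collapses when $\bar f$ is nilpotent because the minimal polynomial then degenerates to $T^\ell$ and yields no information. Generic freeness bypasses this by reducing everything to a coordinatewise divisibility question in $\K[x_1]$, after which the proof is elementary; part (a), by contrast, is routine once the reduction to primes is in place.
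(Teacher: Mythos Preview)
Your argument is correct in both parts, but it follows a genuinely different route from the paper's.

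For part~(a), the paper does not reduce to a single prime; instead it works directly with $I=\langle f_1,\ldots,f_r\rangle$. Given $f\in\bigcap_{\alpha\in S}\sqrt{I+\langle x_1-\alpha\rangle}$, the effective Nullstellensatz (Theorem~\ref{cota-rad}) turns each membership $f\in\sqrt{I+\langle x_1-\alpha\rangle}$ into a bounded-degree Rabinowitsch identity $1=\sum f_ig_i+(x_1-\alpha)h+(1-tf)g$. This is a linear system whose coefficients depend polynomially on $\alpha$, so Lemma~\ref{alg-lin} makes it solvable over $\K(\alpha)$; clearing denominators and specializing $\alpha\mapsto x_1$, $t\mapsto 1/f$ yields $f^N\omega(x_1)\in I$, and the good-position hypothesis is invoked only at the end to strip off $\omega(x_1)$. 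Your approach instead reduces to primes at the outset and finishes with a clean fibre-dimension argument showing $V(P)\cap\pi^{-1}(S)$ is Zariski dense in $V(P)$. Your route is more geometric and avoids any effective bound; the paper's route, by contrast, is tailored so that the same machinery (parametric linear systems plus degree bounds) immediately upgrades to the finite-$S$ statement of Theorem~\ref{thm-3}(a).

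For part~(b), the paper reduces to the primary case $Q$ with $Q\cap\K[x_1]=\{0\}$ (Theorem~\ref{thm-S-inf}) and again uses the linear-algebra paradigm: Hermann's bound (Theorem~\ref{cota}) turns $f\in Q+\langle x_1-\alpha\rangle$ into a bounded-degree linear system in $\alpha$, Lemma~\ref{alg-lin} passes to $\K(t)$, and the key Lemma~\ref{lemainter} shows $(Q+\langle x_1-t\rangle)\cap\Kvars=Q$ by substituting $t\mapsto x_1$ and using primariness. Your argument is structurally different: you never isolate a primary component but instead exploit that good algebraic position makes $A=\Kvars/I$ torsion-free over $\K[x_1]$, then invoke generic freeness to get a basis of $A_h$ over $\K[x_1]_h$ and reduce to infinite divisibility in a PID. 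One small imprecision: the coordinates $c_i$ live in $\K[x_1]_h$, not $\K[x_1]$; but since $x_1-\alpha$ is still prime there for $\alpha\in S'$ and $\K[x_1]_h$ is a UFD, the conclusion $c_i=0$ goes through unchanged. Your proof is more conceptual and sidesteps Hermann's bound entirely; the paper's proof is more elementary in its prerequisites and, again, is designed so that tracking the degrees yields the explicit finite bound of Theorem~\ref{thm-3}(b), which your method does not directly provide.
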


For radical ideals $I\subseteq \Kvars$ in good position geometrically,  we can show
that $I=\bigcap_{\alpha\in S}I+\langle x_1-\alpha\rangle$ for any infinite set $S\subseteq \K$,
using a similar argument as in Eq.(\ref{eq_all}):
\begin{equation}\label{eq_inf}
I\subseteq\bigcap_{\alpha\in S}I+\langle x_1-\alpha\rangle=\bigcap_{\alpha\in S}\sqrt{I}+\langle x_1-\alpha\rangle
\subseteq\bigcap_{\alpha\in S}\sqrt{I+\langle x_1-\alpha\rangle}\;\overset{\ref{thm-2}(a)}{=}\;\sqrt{I}=I. 
\end{equation}

Finally, our third result studies the possibility of reconstructing a variety (or an ideal) from
finitely many cross sections.

\begin{thm}\label{thm-3}
Let $I=\langle f_1,\ldots,f_r\rangle\subseteq \Kvars$ be an ideal, let $S\subseteq \K$ be a finite set,
and let $f\in \Kvars$ with $\deg(f)\leq d$. Let $\delta=\max\{\deg(f_i)\,:\,i=1,\ldots,r\}$.
\begin{itemize}
\item[(a)] $I$ is in good position geometrically (w.r.t.~$x_1$) and $|S|>(d+1)\deg(V(I))$, then:
 $$f\in\sqrt{I} \Longleftrightarrow f\in\sqrt{I+\langle x_1-\alpha\rangle}\;\forall\,\alpha\in S,$$
where $\deg(V(I))$ is the maximum of the degrees of the irreducible components of $V(I)$.
\item[(b)] $I$ is in good position algebraically (w.r.t.~$x_1$) and
 $$|S|>\left(\left(d+2(\delta r)^{2^{n-1}}\right)^n+1\right)\max\{d,\delta \},$$
then:
 $$f\in I \Longleftrightarrow f\in I+\langle x_1-\alpha\rangle\;\forall\,\alpha\in S.$$
\end{itemize}
\end{thm}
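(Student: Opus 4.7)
The plan is to prove the non-trivial $(\Leftarrow)$ direction in each part by contraposition; the forward direction is trivial since $I\subseteq I+\langle x_1-\alpha\rangle$.

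For part (a), I assume $f\notin\sqrt{I}$ and bound the set of $\alpha$ with $V(I)\cap\{x_1=\alpha\}\subseteq V(f)$. Writing $\sqrt{I}=\bigcap_{i=1}^s P_i$ with $P_i\cap\K[x_1]=\{0\}$, I pick an $i$ with $f\notin P_i$ and set $V_i=V(P_i)$. Because $x_1-\alpha$ is a nonzerodivisor in the domain $\Kvars/P_i$, the fiber $V_i\cap\{x_1=\alpha\}$ is either empty or of pure dimension $\dim V_i-1$. The section $Z_i=V_i\cap V(f)$ is proper in $V_i$ and of pure codimension one by Krull's principal ideal theorem, and an affine B\'ezout bound gives $\deg Z_i\leq d\deg V_i$. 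A bad $\alpha$ forces $V_i\cap\{x_1=\alpha\}\subseteq Z_i\cap\{x_1=\alpha\}$; the components of $Z_i$ that project dominantly onto $\K$ have generic fibers of dimension $\dim V_i-2$, so the top-dimensional fiber on the left can only be covered by ``vertical'' components of $Z_i$ entirely contained in $\{x_1=\alpha\}$. Each bad $\alpha$ therefore consumes at least one distinct vertical component, bounding the count by $\deg Z_i\leq d\deg V_i\leq(d+1)\deg V(I)$, contradicting $|S|>(d+1)\deg V(I)$.

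For part (b) the radical-level argument is not enough since the hypothesis probes the (unreduced) primary structure. The plan is to fix a primary component $Q_j$ of the decomposition $I=\bigcap_j Q_j$ with $f\notin Q_j$ and to estimate the number of $\alpha$ with $f\in Q_j+\langle x_1-\alpha\rangle$, using an effective ideal-membership theorem of Hermann--Mayr--Meyer--Seidenberg type: any $g$ belonging to an ideal generated by polynomials of degree $\leq\delta$ admits a representation $g=\sum a_i f_i$ with $\deg(a_if_i)$ controlled by a doubly exponential function of $n$, which is the source of the $(\delta r)^{2^{n-1}}$ factor. Choosing, for each $\alpha\in S$, a local representation $f=\sum_i a_{i,\alpha}f_i+(x_1-\alpha)b_\alpha$ with uniformly bounded $x_2,\ldots,x_n$-degrees, I would assemble a parametric family in $\alpha$ whose coefficients live in a space of monomials in $\K[x_2,\ldots,x_n]$ of dimension at most $(d+2(\delta r)^{2^{n-1}})^n$. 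A univariate interpolation in $x_1$, which succeeds once $|S|$ exceeds this dimension times the $x_1$-degree bound $\max\{d,\delta\}$, would then produce a single global identity $f=\sum_i a_if_i$ witnessing $f\in Q_j$; intersecting over $j$ gives $f\in I$.

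The main obstacle is the interpolation step in (b): one must guarantee that the locally chosen cofactors $a_{i,\alpha}$ can be replaced by polynomials with uniform degree bounds so that their dependence on $\alpha$ fits into a single polynomial in $x_1$ of controlled degree. This is precisely what effective ideal membership supplies, and its doubly exponential dependence on $n$ explains the baroque shape of the bound on $|S|$. By contrast, in part (a) only the underlying variety matters, so an elementary B\'ezout bound suffices and the embedded/primary subtleties do not appear.
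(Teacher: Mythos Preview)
For part~(a), your direct B\'ezout count is a different route from the paper's, which proves the equidimensional case by induction on $\dim V(I)$ (reducing to curves and then invoking Theorem~\ref{thm-2}(a) for the inductive step) and assembles the general case from the equidimensional strata. Your argument that a bad~$\alpha$ with nonempty fibre consumes a vertical component of $Z_i=V_i\cap V(f)$ is sound, since a dominant irreducible component $W\subseteq Z_i$ has every fibre of dimension strictly less than $\dim W$. But you never bound the $\alpha$ with $V_i\cap\{x_1=\alpha\}=\emptyset$: these are also bad (vacuously $V_i\cap\{x_1=\alpha\}\subseteq V(f)$) yet contribute no vertical component to~$Z_i$. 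For instance, with $V_i=\{x_1x_3=1\}\subseteq\K^3$ and $f=x_2$, the set $Z_i$ has no vertical components at all, yet $\alpha=0$ is bad. The paper's curve-level argument handles this explicitly by bounding the empty fibres via the intersection of the projective closure of $V_i$ with the hyperplane at infinity; this is exactly where the ``$+1$'' in $(d+1)\deg V(I)$ comes from, and your bound $\deg Z_i\le d\deg V_i$ misses it.

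For part~(b), the plan has two problems. First, passing to a primary component $Q_j$ is a dead end: the effective Hermann bound needs generators of controlled degree, and you only have these for $I=\langle f_1,\ldots,f_r\rangle$, not for the individual~$Q_j$. Second, ``interpolating the cofactors $a_{i,\alpha}$'' cannot work as stated, because the cofactors in $f=\sum_i a_{i,\alpha}f_i+(x_1-\alpha)b_\alpha$ are not unique, so nothing forces your choices at different $\alpha$ to be the specialisations of a single polynomial family; nor is there any candidate for what the term $(x_1-\alpha)b_\alpha$ should interpolate to. The paper sidesteps both issues by working with $I$ directly: Hermann's bound turns $f\in I$ into a linear system $A(x_1)G=b(x_1)$ whose entries are the $\K[x_1]$-coefficients of $f$ and of the $f_i$ with respect to the monomials in $x_2,\ldots,x_n$, and then Lemma~\ref{alg-lin} (a rank criterion, not interpolation of solutions) shows that compatibility of $A(\alpha)G=b(\alpha)$ for enough $\alpha\in S$ forces compatibility over $\K(x_1)$. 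That only gives $\omega(x_1)f\in I$ for some nonzero $\omega\in\K[x_1]$; the good-position hypothesis is used precisely at this final step to cancel $\omega$ and conclude $f\in I$.
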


Theorem~\ref{thm-3}(b) can be written as:
$$I\cap\{f\,:\,\deg(f)\leq d\} = \bigcap_{\alpha\in S}(I+\langle x_1-\alpha\rangle)\cap\{f\,:\,\deg(f)\leq d\},$$
where $S\subseteq \K$ has at least $\left(\left(d+2(\delta r)^{2^{n-1}}\right)^n+1\right)\max\{d,\delta \}+1$ elements.
In this formulation,
both sides of the equality are $\K$-vector spaces of dimension bounded by $\binom{d+n}{n}$, and in the case where
$d=\delta $, they include the generators of $I$. In particular, it is possible to compute generators of $I$ as the
base of the $\K$-vector space $\bigcap_{\alpha\in S}(I+\langle x_1-\alpha\rangle)\cap\{f\,:\,\deg(f)\leq \delta \}$
when $$|S|>\left(\left(\delta +2\left(\delta \binom{\delta +n}{n}\right)^{2^{n-1}}\right)^n+1\right)\delta .$$ The same conclusion is achieved with
the more simple bound $|S|\geq (\delta +n)^{(n+1)^22^n}$ that overestimates the bound above while keeping its order of
magnitude. It should be noted that, when the number of variables $n$ is fixed, the bound depends polynomially in $\delta$.

\bigskip

Theorem~\ref{thm-3} can be used to reduce the problem of the ideal membership~\cite{Mayr} (for ideals with no
zero-dimensional components), to several instances of the same problem with one variable less. The
idea is to perform first a linear change of coordinates to put the ideal in good position, and then
use the theorem to reduce the problem to a large enough number of cross sections. In the geometric
case, one can easily check whether a polynomial $f$ of degree $d$ vanishes on a given algebraic variety
$V$, by just testing if $f$ vanishes at $(d+1)\deg(V)$ cross sections of $V$.

\bigskip

In~\cite{BJK}, the authors prove that the ideal $I(V)$ of a smooth irreducible variety $V$ is generated by
polynomials of degree bounded by $\deg(V)$. They also provide a probabilistic method to compute those generators.
Theorem~\ref{thm-2}(a), can be used as an alternative procedure to compute the generators of $I(V)$, by iteratively
reducing the number of variables and the dimension of $V$, until we get to a zero-dimensional variety, where we
can use~\cite{BW96} or~\cite{KL91}. At each iteration we change the problem by $(\deg(V)+1)^2$ problems in one
variable less.

\bigskip

In the case of principal ideals, we obtained a much better bound, as shown in the following theorem.

\begin{thm}\label{thm-4}
Let $I= \langle f \rangle \subseteq \Kvars$ be a principal ideal generated by a non-zero polynomial of degree at most~$d $.
Assume that $f\not\in\K[x_1]$.
Let $I_k=I+\langle x_1-\alpha_k\rangle $ for $k=1,\ldots,2d $, where $\alpha_1,\ldots,\alpha_{2d }\in\K$ are
pairwise distinct. Then, the ideal $I$ can be uniquely reconstructed from the pairs $(\alpha_k,I_k)$.
\end{thm}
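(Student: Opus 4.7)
The plan is to prove uniqueness of $I$ within the class of principal ideals having a generator of degree at most $d$ not in $\K[x_1]$. Suppose $J=\langle g\rangle$ is another such ideal with $J+\langle x_1-\alpha_k\rangle=I_k$ for every $k$; the goal is to show that $g$ is a $\K^*$-multiple of $f$. Under the isomorphism $\Kvars/\langle x_1-\alpha_k\rangle\cong\K[x_2,\ldots,x_n]$, the cross section $I_k$ becomes the principal ideal generated by $f(\alpha_k,x_2,\ldots,x_n)$ in this UFD, so matching cross sections yields, for each $k$, a scalar $c_k\in\K$ with $g(\alpha_k,x_2,\ldots,x_n)=c_k\,f(\alpha_k,x_2,\ldots,x_n)$ (and $c_k\in\K^*$ whenever $f(\alpha_k,\cdot)\ne 0$).

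The crucial step is a Bezout-style construction with duplicated variables. Let $y_2,\ldots,y_n$ be a fresh copy of $x_2,\ldots,x_n$ and set
\[
D=f(x_1,x_2,\ldots,x_n)\,g(x_1,y_2,\ldots,y_n)-g(x_1,x_2,\ldots,x_n)\,f(x_1,y_2,\ldots,y_n).
\]
Then $\deg D\le\deg f+\deg g\le 2d$, and the proportionality at each $\alpha_k$ makes $D$ vanish on the hyperplane $x_1=\alpha_k$; hence the degree-$2d$ product $P(x_1):=\prod_{k=1}^{2d}(x_1-\alpha_k)$ divides $D$. Writing $D=C\cdot P$ forces $\deg C\le 0$, so $C\in\K$. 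Substituting $y_i=x_i$ for $i=2,\ldots,n$ collapses both terms of $D$ to $fg$, so $D$ vanishes on this diagonal; hence $C\cdot P\equiv 0$, and since $P$ is a nonzero polynomial in $x_1$, we get $C=0$ and therefore $D\equiv 0$.

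The identity $D\equiv 0$ says that the ratio $g/f$, taken in the fraction field of $\Kvars$, does not depend on $x_2,\ldots,x_n$, so $g=(p/q)f$ with coprime $p,q\in\K[x_1]$; unique factorization in $\Kvars$ then yields $\tilde f\in\Kvars$, with $\tilde f\notin\K[x_1]$, such that $f=q\tilde f$ and $g=p\tilde f$. The remaining step, and the main obstacle I anticipate, is to conclude that $p,q\in\K^*$. Here one must exploit the ``good position'' hypothesis appropriate for the principal case---that no irreducible factor of $f$ or $g$ lies in $\K[x_1]$---so that the divisibilities $q\mid f$ and $p\mid g$ inside $\K[x_1]$ collapse $p$ and $q$ to scalars, yielding $\langle g\rangle=\langle f\rangle$ and completing the proof.
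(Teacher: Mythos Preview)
Your approach is genuinely different from the paper's. The paper writes $f=\sum_{i}a_i(x_1)y^i$ (with $y=(x_2,\dots,x_n)$), reads off from the $g_k$ the values $a_i(\alpha_k)/a_e(\alpha_k)$ of the normalized coefficients, and then recovers each rational function $a_i(x_1)/\tilde a_e(x_1)$ by Cauchy rational interpolation, using the degree bounds $\deg a_i\le d-|i|$ and $\deg\tilde a_e\le d-|e|-r$ to see that $2d-r$ sample points suffice. Your duplicated-variables B\'ezout argument in Steps~1--6 is correct and more conceptual: the identity $D\equiv 0$ cleanly forces $g/f\in\K(x_1)$ without ever writing down a linear system. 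Both routes land in exactly the same place, namely $f=q\tilde f$ and $g=p\tilde f$ with coprime $p,q\in\K[x_1]$, and both then need the same final step---showing $p,q\in\K^\ast$.

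Here is the real issue, and you have put your finger on it in Step~7: that final step \emph{cannot} be carried out under the hypothesis actually stated in the theorem. The assumption is only $f\notin\K[x_1]$, not that $f$ has no irreducible factor in $\K[x_1]$. With merely $f\notin\K[x_1]$ the statement is false: take $n=2$, $d=2$, $f=x_1x_2$, $g=(x_1-1)x_2$, and any four distinct $\alpha_k\in\K\setminus\{0,1\}$. Then $I+\langle x_1-\alpha_k\rangle=\langle x_2,x_1-\alpha_k\rangle=J+\langle x_1-\alpha_k\rangle$ for all $k$, yet $\langle f\rangle\ne\langle g\rangle$. So the ``good position'' hypothesis you invoke (no irreducible factor of $f$ in $\K[x_1]$) is not merely convenient---it is necessary, and once it is in place your Step~7 is immediate: $q\mid f$ with $q\in\K[x_1]$ forces $q\in\K^\ast$, and symmetrically for $p$.

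The paper's proof has the very same gap: it stops after reconstructing the rational functions $a_i(x_1)/\tilde a_e(x_1)$, but $\tilde a_e$ itself is not determined by the data unless one knows that $\gcd_i a_i(x_1)\in\K^\ast$, which is precisely the good-position condition. In the counterexample above, the paper's procedure returns $a_e/\tilde a_e=1$ and $a_0/\tilde a_e=0$ for both $f$ and $g$, and cannot tell them apart. So your diagnosis is accurate; just be explicit that you are strengthening the hypothesis, since the printed statement with only $f\notin\K[x_1]$ does not hold.
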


Note that the information that $I$ is principal has to be known a priori. We have also found (see Example~\ref{ex1})
two ideals $I,J\subseteq\C[x,y]$, generated by polynomials of degree $d $, and $2d -1$ points $\alpha_1,\ldots,\alpha_{2d -1}\in\C$,
such that $I+\langle x-\alpha_i\rangle=J+\langle x-\alpha_i\rangle$ for all $i=1,\ldots,2d -1$. This shows that
the bound of Theorem~\ref{thm-4} can not be improved.

\section{Interpolation of ideals and algebraic varieties}\label{sec2}

\begin{prop}\label{prop1}
Let $I\subseteq\Kvars$ be a radical ideal. Then
$$I=\bigcap_{\alpha\in\K}I+\langle x_1-\alpha\rangle.$$
\end{prop}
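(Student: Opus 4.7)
The plan is to prove each inclusion separately. The containment $I \subseteq \bigcap_{\alpha\in\K}(I+\langle x_1-\alpha\rangle)$ is immediate, since $I$ is a subset of each ideal in the intersection. For the reverse containment, I would exploit the hypothesis that $I$ is radical by applying Hilbert's Nullstellensatz (which is available because $\K$ is algebraically closed): it suffices to show that any $f$ in the intersection vanishes on every point of $V(I)$.

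So the key step is: pick an arbitrary $f \in \bigcap_{\alpha\in\K}(I+\langle x_1-\alpha\rangle)$ and an arbitrary point $p=(p_1,\ldots,p_n)\in V(I)$, and specialize $\alpha := p_1$. Then $f \in I + \langle x_1-p_1\rangle$, so we can write $f = g + (x_1-p_1)h$ with $g\in I$ and $h\in\Kvars$. Evaluating at $p$: the term $g(p)=0$ because $p\in V(I)$, and the term $(x_1-p_1)h$ vanishes at $p$ by construction. Hence $f(p)=0$. Since $p\in V(I)$ was arbitrary, $f$ vanishes identically on $V(I)$, i.e. $f \in I(V(I)) = \sqrt{I} = I$ (using the Nullstellensatz and the radical assumption).

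There is no real obstacle in this proof; the content is just the correct use of Nullstellensatz together with the observation that a point of $V(I)$ sits on exactly one of the hyperplanes $\{x_1=\alpha\}$, namely the one with $\alpha$ equal to its first coordinate, so the membership of $f$ in the cross-sectional ideal at that particular $\alpha$ is what forces $f(p)=0$. Note that this argument uses neither Theorem~\ref{thm-1} nor any finiteness/primary decomposition considerations, which is why it can serve as a self-contained warm-up to the more delicate Theorem~\ref{thm-1}(a) and Theorem~\ref{thm-2}(a) that follow.
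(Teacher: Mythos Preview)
Your proof is correct and is essentially identical to the paper's own argument: both directions are handled the same way, picking an arbitrary $p\in V(I)$, specializing $\alpha=p_1$, writing $f=g+(x_1-p_1)h$, evaluating at $p$, and invoking $I(V(I))=\sqrt{I}=I$.
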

\begin{proof}
($\subseteq$): Trivial.
($\supseteq$): Let $f\in \bigcap_{\alpha\in\K}I+\langle x_1-\alpha\rangle$ and $p=(p_1,\ldots,p_n)\in V(I)$. Since $f\in I+\langle x_1-p_1\rangle$, there are $g\in I$ and $q\in\Kvars$ such that $f=g+(x_1-p_1)q$. Therefore
$f(p)=g(p)+(p_1-p_1)q(p)=0$. This implies that $f\in I(V(I))=\sqrt{I}=I$.
\end{proof}

Proposition~\ref{prop1} is the geometric analogue of slicing an algebraic variety and
then putting all these slices together. The same technique can be used to prove Theorem~\ref{thm-1}(a),
which is slightly stronger than Proposition~\ref{prop1}, since the ideal $\sqrt{I+\langle x_1-\alpha\rangle}$
contains the ideal $I+\langle x_1-\alpha\rangle$ for all $\alpha\in\K$.

\begin{proof}[Proof of Theorem~\ref{thm-1}(a).]
($\subseteq$): Trivial.
($\supseteq$): Let $f\in \bigcap_{\alpha\in\K}\sqrt{I+\langle x_1-\alpha\rangle}$ and $p=(p_1,\ldots,p_n)\in V(I)$.
There exists $k\geq 1$ such that $f^k\in I+\langle x_1-p_1\rangle$. This means that $f^k$ can be written as
$f^k=g+(x_1-p_1)q$ for some $g\in I$ and $q\in\Kvars$. Evaluating at the point~$p$, we get
$f^k(p)=g(p)+(p_1-p_1)q(p)=0$, and then $f(p)=0$. This implies that $f\in I(V(I))=\sqrt{I}$.
\end{proof}

\begin{lemma}\label{lemma2}
Let $I\subseteq\Kvars$ be an ideal and let $f,g\in\K[x_1]$ with
$\gcd(f,g)=1$. Then $$(I+\langle f\rangle )\cap(I+\langle g\rangle )=I+\langle fg\rangle.$$
\end{lemma}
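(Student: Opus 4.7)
The inclusion $I+\langle fg\rangle \subseteq (I+\langle f\rangle)\cap(I+\langle g\rangle)$ is immediate, since $I$ sits inside each factor and $fg$ lies in both $\langle f\rangle$ and $\langle g\rangle$. So the real content is the reverse inclusion.

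For that direction, my plan is to exploit the hypothesis $\gcd(f,g)=1$ in the principal ideal domain $\K[x_1]$ via Bezout: there exist $u,v\in\K[x_1]$ with $uf+vg=1$. Take an arbitrary $h\in(I+\langle f\rangle)\cap(I+\langle g\rangle)$ and write it in two ways, $h=a+fb$ with $a\in I$, $b\in\Kvars$, and $h=c+gd$ with $c\in I$, $d\in\Kvars$. Then I would multiply the Bezout identity by $h$ to get
$$h \;=\; h(uf+vg) \;=\; (uf)h+(vg)h.$$
Substitute the second representation of $h$ into the first summand and the first representation into the second summand:
$$h \;=\; uf(c+gd)+vg(a+fb) \;=\; \bigl(ufc+vga\bigr)+\bigl(ufgd+vgfb\bigr).$$
The first parenthesis lies in $I$ (since $a,c\in I$) and the second lies in $\langle fg\rangle$, so $h\in I+\langle fg\rangle$, finishing the proof.

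There is no real obstacle here beyond setting up the Bezout identity correctly and being careful that $u,v$ are chosen in the univariate ring $\K[x_1]$ (so that the coprimality hypothesis, which is a hypothesis in $\K[x_1]$, can be used); the argument is essentially the Chinese Remainder Theorem applied to the quotient $\Kvars/I$ with the images of $f$ and $g$. The only subtlety worth flagging is that $\gcd(f,g)=1$ is asserted in $\K[x_1]$, not in $\Kvars$, but this is exactly what Bezout requires, and the identity $uf+vg=1$ continues to hold after inclusion into $\Kvars$.
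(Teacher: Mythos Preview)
Your proof is correct and follows essentially the same approach as the paper's: both use the B\'ezout identity $uf+vg=1$ in $\K[x_1]$ together with the two representations of $h$ to exhibit $h$ as an element of $I+\langle fg\rangle$. The only cosmetic difference is that the paper subtracts the two representations to get $ff'=(h_2-h_1)+gg'$ and then multiplies by the B\'ezout coefficient $a$ to isolate $f'$ modulo $I+\langle g\rangle$, whereas you multiply $h$ directly by $uf+vg$ and substitute; your version is slightly more symmetric, but the underlying idea is identical.
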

\begin{proof}
($\supseteq$): Trivial.
($\subseteq$): Take $h\in (I+\langle f\rangle)\cap(I+\langle g\rangle)$. We can write
$h=h_1+ff'=h_2+gg'$ with $h_1,h_2\in I$ and $f',g'\in \Kvars$. Let $a,b\in \K[x_1]$ be
polynomials such that $af+bg=1$. Since $ff'=h_2-h_1+gg'$, then $aff'=a(h_2-h_1)+agg'$, and
also $f'=a(h_2-h_1)+g(ag'+bf')$. This implies that $ff'\in I+\langle fg\rangle$, and since
$h_1\in I$, we conclude that $h=h_1+ff'\in I+\langle fg\rangle$.
\end{proof}

Lemma~\ref{lemma2} allows us to rewrite Proposition~\ref{prop1} as follows:
$$I\subseteq\Kvars\;\mbox{radical}\;\Longrightarrow
I=\!\!\!\!\!\!\bigcap_{\doslineas{p\in\K[x_1]\setminus\{0\}}{{\text{squarefree}}}}\!\!\!\!\!\!I+\langle p\rangle.$$

Proposition~\ref{prop1} does not work for general ideals. For instance,
the ideal $I=\langle x_1^2x_2\rangle$ and $J=\langle x_1x_2\rangle$
satisfy $I+\langle x_1-\alpha\rangle = J+\langle x_1-\alpha\rangle$ for
all $\alpha\in\K$, but $I\neq J$. Theorem~\ref{thm-1}(b) shows that this problem
can be avoided by considering arbitrarily large powers of $x_1-\alpha$.

\begin{proof}[Proof of Theorem~\ref{thm-1}(b).]
Let $\mathcal{P}$ denote the set of non-zero polynomials in $\K[x_1]$.
By Lemma~\ref{lemma2} it is enough to show that $I=\bigcap_{p\in\mathcal{P}}I+\langle p\rangle$.
We show first that the we can reduce the proof to the case where $I$ is a primary ideal.
Indeed, if $I=Q_1\cap\cdots\cap Q_r$ with $Q_i$ primary ideals, then 
$$I\subseteq\bigcap_{p\in\mathcal {P}}I+\langle p\rangle
 =\bigcap_{p\in\mathcal {P}}((Q_1\cap\dots\cap Q_r)+\langle p\rangle)\subseteq$$
$$\subseteq \bigcap_{p\in\mathcal {P}}((Q_1+\langle p\rangle)\cap\dots\cap (Q_r+\langle p\rangle))=
            \bigcap_{i=1}^r\bigcap_{p\in\mathcal {P}}(Q_i+\langle p\rangle)=\bigcap_{i=1}^r Q_i=I.$$

Besides, if there is a non-zero polynomial $q$ in $I$ pure in $x_1$, then it is clear that
$$I\subseteq\bigcap_{p\in\mathcal {P}} I+\langle p\rangle\subseteq I+\langle q\rangle=I.$$ This
reduces the proof to the case of primary ideals $I$ such that $I\cap\K[x_1]=\{0\}$.

Take $I$ a primary ideal with $I\cap \K[X_1]={0}$. Let $f\in I+\langle p\rangle$.
For all $p\in\mathcal{P}$ we can write $f=f_p+pg_p$ with $f_p\in I$ and $g_p\in \Kvars$.
Now we compare the two representations of $f$ with subindices $p$ and $pq$ for $p,q\in\mathcal{P}$.
We have that $f=f_p+pg_p=f_{pq}+pqg_{pq}$. This implies that $p(g_p-qg_{pq})\in I$ and, since $p\notin\sqrt{I}$,
we get $g_p-qg_{pq}\in I$, and also that $g_p\in I+\langle q\rangle$.
Write $J=\bigcap_{p\in\mathcal {P}}I+\langle p\rangle$.
The previous discussion proves that $J\subseteq\bigcap_{p\in\mathcal{P}}(I+\langle p\rangle J)$, and since
the other inclusion is trivial, we obtain:
\begin{equation}\label{eq1}
 J=\bigcap_{p\in\mathcal {P}}(I+\langle p\rangle J)
\end{equation}
Now we localize Eq.(\ref{eq1}) at the maximal ideal
$\M=\langle x_1-\alpha_1,\ldots,x_n-\alpha_n\rangle\in \Kvars$.

$$J_\M=\left( \bigcap_{p\in\mathcal{P}}(I+\langle p\rangle J)\right)_\M
      \subseteq \bigcap_{p\in\mathcal {P}}(I+\langle p\rangle J)_\M
      \subseteq \bigcap_{p\in\mathcal {P}}(I_\M+\langle p\rangle J_\M) \subseteq J_\M.$$

Now we have 
$J_\M=\bigcap_{p\in\mathcal {P}}I_\M+\langle p\rangle J_\M$ as $\Kvars_\M$-modules. This intersection
can be rewritten as:
$$J_\M=\underbrace{\left(\bigcap_{\doslineas{p\in\mathcal {P}}{p(\alpha_1)=0}}I_\M+\langle p\rangle J_\M\right)}_{J'}\cap
  \underbrace{\left(\bigcap_{\doslineas{p\in\mathcal {P}}{p(\alpha_1)\neq 0}}I_\M+\langle p\rangle J_\M\right)}_{J''}.$$
For any $p\in\mathcal{P}$ such that $p(\alpha_1)\neq 0$, we have that $\langle p\rangle=\langle 1\rangle$ in
$\Kvars_\M$ and consequently $I_\M+\langle p\rangle J_\M=J_\M$. Therefore $J''=J_\M$.
For any $p\in\mathcal{P}$ with $p(\alpha_1)=0$, we have that $\langle p\rangle \subseteq \langle x_1-\alpha_1\rangle$,
and therefore $J'\subseteq I_\M+\langle x_1-\alpha_1\rangle J_\M$. All together, this shows that
$J_\M=J'\cap J''\subseteq I_\M+\langle x_1-\alpha_1\rangle$, and by Nakayama's Lemma $J_\M=I_\M$.
Since this is true for any maximal ideal $\M$, it follows from the global-local principle that $I=J$.
\end{proof}

Theorem~\ref{thm-1}(b) is the algebraic counterpart of the more geometric intuitive
Proposition~\ref{prop1} and Theorem~\ref{thm-1}(a). These results show that ideal reconstruction
is possible if we are given all the cross sections. Indeed, it is possible to recover
ideals (with no vertical embbedded components) with infinitely many sections, as we show
below. The extra assumption is necessary, as shown by the ideals
$I=\langle (x+y)^2, (x+y)x\rangle = \langle x+y\rangle \cap \langle x,y\rangle$ and
$J=\langle x+y\rangle$ which satisfy $I+\langle x-\alpha\rangle = J+\langle x-\alpha\rangle = \langle x+y, x-\alpha\rangle$
for all $\alpha\neq 0$, but $I\neq J$. The problem in this example comes from the embedded component $\{(0,0)\}$ of
$I$, corresponding to the primary ideal $\langle x,y\rangle$, that is invisible to all the vertical planes $\{x=\alpha\}$
with $\alpha\neq 0$.

\begin{lemma}\label{alg-lin}
Let $A(t)\in \K[t]^{N\times M}$ and $b(t)\in \K[t]^{N\times 1}$.
\begin{itemize}
\item[1.] If $(A(t)|b(t))$ is incompatible in $\K(t)$ then $(A(\alpha)|b(\alpha))$ is compatible in $\K$ for only a finite number of $\alpha\in \K$.
\item[2.] If $(A(t)|b(t))$ is compatible in $\K(t)$ then $(A(\alpha)|b(\alpha))$ is compatible in $\K$ for $\alpha\in \K$ but a finite number.
\item[3.] Assume that $\deg(A_{ij}),\deg(b_i)\leq d$ for all $1\leq i\leq N$ and $1\leq j\leq M$. Let $S\subseteq\K$ with $|S|>d\max\{N, M+1\}$.
Then $(A(t)|b(t))$ is compatible if and only if $(A(\alpha)|b(\alpha))$ is compatible for all $\alpha\in S$.
\end{itemize}
\end{lemma}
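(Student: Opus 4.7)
The plan is to reduce all three statements to the classical rank criterion for solvability of a linear system and analyze how ranks behave under specialization $t \mapsto \alpha$. Recall that over any field $F$, the system $Ax=b$ is compatible iff $\rank_F A = \rank_F(A|b)$, a condition detected by the (non-)vanishing of minors. For any polynomial matrix $M(t)$ one always has $\rank_{\K} M(\alpha) \leq \rank_{\K(t)} M(t)$, with equality precisely when some nonzero polynomial minor of $M$ of the maximal size does not vanish at $\alpha$.

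For part~1, if $(A|b)$ is incompatible over $\K(t)$, set $r := \rank_{\K(t)} A < s := \rank_{\K(t)}(A|b)$ and choose an $s \times s$ submatrix of $(A|b)$ whose determinant $\Delta(t) \in \K[t]$ is a nonzero polynomial. Every $s \times s$ minor of $A$ is identically zero (because $s > r$), so the chosen submatrix must include the column $b$. For any $\alpha$ with $\Delta(\alpha) \neq 0$ we obtain $\rank(A(\alpha)|b(\alpha)) \geq s > r \geq \rank A(\alpha)$, so incompatibility persists; thus compatibility can occur only at the finitely many zeros of $\Delta$. Part~2 is the dual argument: when $(A|b)$ is compatible over $\K(t)$ with common rank $r$, pick an $r \times r$ submatrix of $A$ with nonzero determinant $\Delta(t)$; off its zero set, $\rank A(\alpha) \geq r$, and every $(r+1) \times (r+1)$ minor of $(A|b)$ is identically zero, forcing $\rank(A(\alpha)|b(\alpha)) \leq r$ and hence compatibility at $\alpha$.

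For part~3, I quantify these exceptional sets by a degree bound. In each of the two arguments, $\Delta$ is a minor of size at most $\min\{N, M+1\}$ with entries of degree at most $d$, so $\deg \Delta \leq d \min\{N, M+1\} \leq d \max\{N, M+1\} < |S|$. Therefore $\Delta$ cannot vanish on all of $S$, and the finite-exception statements of parts~1 and~2 sharpen into the biconditional of part~3: incompatibility over $\K(t)$ is detected by any $\alpha \in S$ lying outside the zero set of the witness from part~1, while compatibility over $\K(t)$ is reflected in the elements of $S$ lying off the witness of part~2. The main delicate point is producing, in each direction, a single witness minor whose size bound $\min\{N, M+1\}$ is absorbed by the uniform cardinality threshold $d\max\{N, M+1\}$ on $|S|$, so that a single choice of $S$ works for both implications at once.
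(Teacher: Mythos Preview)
Your approach matches the paper's: reduce everything to the rank criterion, realize ranks via minors, and use that a nonzero polynomial minor vanishes at only finitely many (and, for part~3, fewer than $|S|$) points. Parts~1 and~2, and the ``if'' direction of part~3, are handled correctly and with more detail than the paper's brief sketch.

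There is, however, a gap in your treatment of the ``only if'' direction of part~3. Your argument only shows that compatibility over $\K(t)$ forces compatibility at those $\alpha\in S$ lying outside the zero set of the witness $\Delta$ from part~2, not at \emph{every} $\alpha\in S$; your sentence ``compatibility over $\K(t)$ is reflected in the elements of $S$ lying off the witness of part~2'' concedes exactly this. In fact that direction is false as literally stated: with $N=M=1$, $A(t)=(t)$, $b(t)=(1)$, $d=1$, the system is compatible over $\K(t)$ yet incompatible at $\alpha=0$, and nothing prevents $0\in S$. The paper's own proof is equally silent on this point, and the direction is never actually used in the applications (the forward implication in each membership test is trivial), so this is really a defect in the formulation of part~3 rather than in either proof.
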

\begin{proof}
The rank of any matrix with coefficients in $\K[t]$ is the size of the largest
submatrix with non-zero determinant. Since the determinant of that submatrix is a polynomial
in $t$, its evaluation at $\alpha$ is non-zero for almost every $\alpha\in\K$.
The first two statements follow immediately from that remark and the fact
that a system $(A|b)$ is compatible if and only if $\rank(A|b)=\rank(A)$.
For the last item, note that the degree of the determinant of any square submatrix of $(A(t)|b(t))$ has degree
at most $d\max\{N, M+1\}$.
\end{proof}

\begin{thm}\label{cota}
Let $I=\langle f_1,\ldots,f_r\rangle\subseteq\Kvars$ be an ideal with $\deg(f_i)\leq \delta$ for $i=1,\ldots,r$,
and let $f\in I$. Then there exists $g_1,\ldots,g_r\in\Kvars$ such that $f=g_1f_1+\cdots+f_rg_r$ and
$\deg(g_i)\leq\deg(f)+2(r\delta)^{2^{n-1}}$.
\end{thm}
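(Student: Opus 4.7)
The statement is a Hermann-type doubly-exponential degree bound for ideal membership, and the form $2(r\delta)^{2^{n-1}}$ strongly suggests an induction on the number of variables $n$ in which each elimination step roughly squares the previous degree estimate. The plan is therefore to induct on $n$, with the inductive step reducing the $n$-variable problem to an $R$-linear system over $R:=\K[x_1,\ldots,x_{n-1}]$ and invoking the hypothesis to control the $R$-degrees of a solution.

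For the base case $n=1$, the ring $\K[x_1]$ is a principal ideal domain, so $I=\langle h\rangle$ with $h=\gcd(f_1,\ldots,f_r)$ of degree $\leq\delta$; iterated Bezout yields $h=\sum a_if_i$ with $\deg(a_i)\leq(r-1)\delta$, and writing $f=qh$ with $\deg(q)=\deg(f)-\deg(h)$ gives the representation $f=\sum(qa_i)f_i$ with cofactor degrees at most $\deg(f)+(r-1)\delta<\deg(f)+2(r\delta)^{2^{0}}$, as required.

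For the inductive step, I would assume the bound in $n-1$ variables and view $\Kvars=R[x_n]$. After a generic linear change of coordinates one may assume some generator, say $f_1$, is monic in $x_n$ of degree $d_1\leq\delta$; dividing by $f_1$ in $x_n$ caps the $x_n$-degree of each sought $g_i$ at $d_1$, and equating coefficients of powers of $x_n$ in $f=\sum g_if_i$ converts the problem into an $R$-linear system $M\vec c=\vec b$, whose entries are elements of $R$ of degree $\leq\delta$ and whose unknowns are the $R$-coefficients of the $g_i$. To bound the $R$-degrees of a solution I would apply Cramer's rule on a maximal nonsingular square submatrix $M'\subseteq M$: the Cramer components are minors of $R$-degree $\leq N\delta$ (where $N=\dim M'$) divided by $\det(M')$. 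To clear this denominator one invokes the inductive hypothesis on a suitable $R$-ideal generated by the maximal minors of $M$, returning cofactors of $R$-degree $\leq 2(r'\delta')^{2^{n-2}}$ with $r',\delta'$ polynomial in $r,\delta,N$. A careful combination of these estimates yields a recursion of the form $D_n\leq c(r\delta)\cdot D_{n-1}^{2}$, which iterates to exactly $2(r\delta)^{2^{n-1}}$.

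The hard part will be the self-referential bookkeeping: the size $N$ of the $R$-system depends on the $x_n$-degree ansatz $D_n$, which depends in turn on the inductive bound applied to parameters that themselves depend on $N$. Closing this loop so that the constants multiply out to precisely $2(r\delta)^{2^{n-1}}$ --- and verifying that the auxiliary minor-ideals really fall under the scope of the inductive hypothesis --- is the most delicate part of the argument; the rest is routine elimination in $x_n$ and Bezout bookkeeping.
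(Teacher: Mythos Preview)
The paper does not actually prove this theorem: its entire proof is the one-line citation ``See~[Her26, Anwendung von Satz~3]''. So there is no in-paper argument to compare against; the result is quoted as a black box. Your plan is, in outline, precisely Hermann's classical strategy --- induct on $n$, make a generator monic in the last variable, cap the $x_n$-degree by division, and turn $f=\sum g_if_i$ into an $R$-linear system over $R=\K[x_1,\ldots,x_{n-1}]$ --- so in that sense you are reconstructing the cited reference rather than diverging from it.

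What you have submitted, however, is a plan rather than a proof, and the place where it stops is exactly the place where the content lies. Two specific concerns. First, your proposed device for clearing the Cramer denominator --- invoking the inductive hypothesis on ``a suitable $R$-ideal generated by the maximal minors of $M$'' --- is not how Hermann's argument runs and is unlikely to close with the stated constant: the number $r'$ of such minors is binomial in the system size $N$, and their degrees are already of order $N\delta$, so $2(r'\delta')^{2^{n-2}}$ will typically overshoot $2(r\delta)^{2^{n-1}}$. The classical route instead proves, simultaneously with the ideal-membership bound, a companion lemma bounding the degree of a solution to a solvable $R$-linear system directly (essentially a Cramer/Smith argument feeding back into the same induction), and it is that lemma --- not ideal membership for the minor ideal --- that controls the recursion. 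Second, your asserted recursion $D_n\le c(r\delta)\,D_{n-1}^{2}$ is exactly what must be \emph{derived}; getting $N$ bounded by something of order $r\delta$ via the division step and then checking that the squaring iterates to $2(r\delta)^{2^{n-1}}$ is the whole computation, and you have not carried it out. Until those two points are resolved, the argument is a sketch of the right shape but not yet a proof.
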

\begin{proof}
See~\cite[Anwendung von Satz.~3]{Her26}.
\end{proof}

\begin{lemma}\label{lemainter}
Let $I\subseteq\Kvars$ be a primary ideal with $I\cap\K[x_1]=\{0\}$, then
$$(I+\langle x_1-t \rangle )\cap\Kvars=I$$
where $I+\langle x_1-t \rangle$ is regarded as an ideal of $\K(t)[x_1,\ldots,x_n]$.
\end{lemma}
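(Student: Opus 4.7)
The plan is as follows. The inclusion $I \subseteq (I+\langle x_1-t\rangle)\cap\Kvars$ is immediate from $I\subseteq I+\langle x_1-t\rangle$, so I focus on the reverse. Take $f\in\Kvars$ with $f\in I+\langle x_1-t\rangle$ in $\K(t)[x_1,\ldots,x_n]$. By definition, $f = g + (x_1-t)h$ for some $g$ in the extension of $I$ to $\K(t)[x_1,\ldots,x_n]$ and some $h\in\K(t)[x_1,\ldots,x_n]$. Clearing a common denominator, there exists $d(t)\in\K[t]\setminus\{0\}$ such that
$$d(t)\,f \;=\; G(t,x_1,\ldots,x_n) + (x_1-t)\,H(t,x_1,\ldots,x_n)$$
holds in $\Kvars[t]$, with $G\in I\cdot\Kvars[t]$ and $H\in\Kvars[t]$.

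Next I apply the $\K$-algebra homomorphism $\varphi\colon\Kvars[t]\to\Kvars$ defined by $\varphi(t)=x_1$ and $\varphi(x_i)=x_i$. This map kills $x_1-t$, fixes $f$ (which does not involve $t$), and carries the extension $I\cdot\Kvars[t]$ back into $I$ because $\varphi$ restricts to the identity on $\Kvars$. Applying $\varphi$ to the displayed identity yields
$$d(x_1)\,f\in I,$$
where $d(x_1)\in\K[x_1]\setminus\{0\}$ since $d(t)\in\K[t]$ was non-zero (the substitution $t\mapsto x_1$ is an isomorphism $\K[t]\to\K[x_1]$).

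Finally, I exploit primariness. The hypothesis $I\cap\K[x_1]=\{0\}$ forces $\sqrt{I}\cap\K[x_1]=\{0\}$: any non-zero $p\in\sqrt{I}\cap\K[x_1]$ would satisfy $p^k\in I\cap\K[x_1]=\{0\}$ for some $k\geq 1$, contradicting $p\neq 0$ in the domain $\Kvars$. Consequently $d(x_1)\notin\sqrt{I}$, and the defining property of primary ideals, applied to $d(x_1)\cdot f\in I$, delivers $f\in I$. The main technical bookkeeping is the clearing-of-denominators step together with verifying that $\varphi$ correctly maps the extension of $I$ back into $I$; once those are pinned down, the primary hypothesis closes the argument in one line.
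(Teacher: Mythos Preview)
Your proof is correct and follows essentially the same route as the paper: clear denominators to pass from $\K(t)[x_1,\ldots,x_n]$ to $\K[t,x_1,\ldots,x_n]$, substitute $t\mapsto x_1$ to obtain $d(x_1)f\in I$, and then use primariness together with $I\cap\K[x_1]=\{0\}$ to conclude $f\in I$. Your formulation via the homomorphism $\varphi$ and the explicit verification that $I\cap\K[x_1]=\{0\}$ implies $\sqrt{I}\cap\K[x_1]=\{0\}$ are slight elaborations, but the argument is identical in substance.
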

\begin{proof}
($\supseteq$): Trivial.
($\subseteq$): Assume that $I=\langle f_1,\ldots,f_r\rangle$ with $f_i\in\Kvars$.
Take $f\in \K[x_1,\ldots, x_n]$ and suppose that it can be written as $f=f_1g_1+\ldots+f_rg_r+(x_1-t)g$
with $g_i\in \K(t)[x_1,\ldots, x_n]$. Clearing denominators by multiplying by $\omega(t)\in \K[t]$,
gives $\omega(t)f=f_1\bar{g_1}+\ldots+f_r\bar{g_r}+(x_1-t)\bar{g}$ where
$\bar{g_1},\ldots,\bar{g_r},\bar{g}\in \K[t,x_1,\ldots,x_n]$. Since $f_1,\ldots, f_r,f$ do not
involve the variable $t$, substituting $t=x_1$, gives $\omega(x_1)f\in I$. Besides,
$\omega(x_1)\notin\sqrt{I}$ because $I\cap \K[x_1]={0}$. Since $I$ is primary, we conclude that
$f\in I$.
\end{proof}

We start with a simplified version of Theorem~\ref{thm-2}(b) for primary ideals.

\begin{thm}\label{thm-S-inf}
Let $I\in\Kvars$ be a primary ideal with $I\cap \K[x_1]=\{0\}$ and let $S\subseteq\K$ be an infinite set, then
$$I=\bigcap_{\alpha\in S}I+\langle x_1-\alpha\rangle.$$
\end{thm}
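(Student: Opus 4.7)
The inclusion $I \subseteq \bigcap_{\alpha \in S}(I+\langle x_1-\alpha\rangle)$ is trivial; the content is the reverse direction. The plan is to take $f$ in the intersection, assemble the individual membership certificates $f = g_\alpha + (x_1-\alpha)h_\alpha$ (with $g_\alpha \in I$) for $\alpha \in S$ into a single ``universal'' representation $f = f_1 g_1(t,x) + \cdots + f_r g_r(t,x) + (x_1-t) h(t,x)$ valid over $\K(t)$, and then to use Lemma~\ref{lemainter} to descend back to $\Kvars$. The two key ingredients will be Hermann's degree bound (Theorem~\ref{cota}), which makes the certificates uniformly bounded in $\alpha$, and the rank-in-a-family principle of Lemma~\ref{alg-lin}.

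Fix generators $f_1, \ldots, f_r$ of $I$ with $\deg(f_i) \leq \delta$ and let $f$ lie in the intersection. For each $\alpha \in S$ the ideal $I + \langle x_1 - \alpha \rangle$ is generated by the $r+1$ polynomials $f_1, \ldots, f_r, x_1 - \alpha$, each of degree at most $\max\{\delta, 1\}$, so Theorem~\ref{cota} produces a membership certificate with cofactors of degree at most $D := \deg(f) + 2((r+1)\delta)^{2^{n-1}}$, a bound that is independent of $\alpha$. Next I would introduce indeterminate coefficients for generic polynomials $g_1, \ldots, g_r, h$ of degree at most $D$ in $x_1, \ldots, x_n$, and expand $f - f_1 g_1 - \cdots - f_r g_r - (x_1 - t) h = 0$ by equating coefficients of each monomial in $x_1, \ldots, x_n$. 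This yields a linear system $A(t)\mathbf{v} = b(t)$ with entries in $\K[t]$ whose compatibility at $t = \alpha$ is exactly the assertion $f \in I + \langle x_1 - \alpha \rangle$ with cofactors of degree $\leq D$. Since that compatibility holds for every $\alpha$ in the infinite set $S$, Lemma~\ref{alg-lin}(1), in contrapositive form, forces the system to be compatible already in $\K(t)$.

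A $\K(t)$-solution exhibits $f$ as an element of the ideal $I + \langle x_1 - t \rangle$ of $\K(t)[x_1, \ldots, x_n]$, and Lemma~\ref{lemainter}, whose hypotheses on $I$ are exactly the ones assumed, then yields $f \in I$. The main obstacle is precisely the first step: without the uniform degree control provided by Theorem~\ref{cota}, the linear system would live in infinitely many unknowns and Lemma~\ref{alg-lin} would be unavailable. Once that bound is in hand, the argument reduces to finite-dimensional linear algebra over $\K[t]$ and assembles itself.
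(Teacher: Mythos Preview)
Your proposal is correct and follows essentially the same route as the paper: Hermann's degree bound (Theorem~\ref{cota}) to make the membership certificates uniformly bounded in $\alpha$, Lemma~\ref{alg-lin} to pass from compatibility at infinitely many $\alpha\in S$ to compatibility over $\K(t)$, and then Lemma~\ref{lemainter} to descend back to $I$. The only cosmetic difference is that the paper records the full biconditional $f\in\bigcap_{\alpha\in S}(I+\langle x_1-\alpha\rangle)\Longleftrightarrow f\in I+\langle x_1-t\rangle$ before invoking Lemma~\ref{lemainter}, whereas you use only the forward implication, which is all that is needed.
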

\begin{proof}
Assume that $I=\langle f_1,\ldots,f_r\rangle$ with $f_i\in\Kvars$ and take $f\in\Kvars$.
Let $C=\deg(f)+2((r+1)\delta)^{2^{n-1}}$ where $\delta=\max\{1,\deg(f_1),\ldots,\deg(f_r)\}$. For a
given $\alpha\in\K$, we have that $f\in I+\langle x_1-\alpha\rangle$ if and only if
there exist $g_1,\ldots,g_r,g\in\Kvars$ with degree bounded by $C$ such that
$f=f_1g_1+\cdots+f_rg_r+(x_1-\alpha)g$, by Theorem~\ref{cota}. This is a linear system of
equations with coefficients that depend polynomially in $\alpha$. By Lemma~\ref{alg-lin},
if this system is
compatible for an infinite number of $\alpha$, then it is compatible in $\K(\alpha)$
where $\alpha$ is regarded as an indeterminate. Conversely, if the system is incompatible
for infinitely many values of $\alpha$, then it is also incompatible in $\K(\alpha)$.
All together this says that:
$$f\in\bigcap_{\alpha\in S}I+\langle x_1-\alpha\rangle\;\Longleftrightarrow\;
  f\in I+\langle x_1-t\rangle\subseteq\K(t)[x_1,\ldots,x_r].$$
We conclude immediately the proof by using Lemma~\ref{lemainter}.
\end{proof}

At this point we have all the tools needed to show the main result of this section.

\begin{proof}[Proof of Theorem~\ref{thm-2}(b).]
($\subseteq$): Trivial.
($\supseteq$): Assume that $I=\bigcap_{i=1}^r Q_i$ with $Q_i$ primary and $Q_i\cap \K[x_1]=\{0\}$. We have that: 
\begin{eqnarray*}
\bigcap_{\alpha\in S} I+ \langle x_1-\alpha \rangle & = & \bigcap_{\alpha\in S}\left[ \left( \bigcap_{i=1}^r Q_i\right)+ \langle x_1-\alpha \rangle \right]\subseteq\\
& \subseteq & \bigcap_{\alpha\in S} \bigcap_{i=1}^r \left(Q_i+ \langle x_1-\alpha \rangle \right)=\\
&=&\bigcap_{i=1}^r\bigcap_{\alpha\in S} \left(Q_i+ \langle x_1-\alpha \rangle \right).
\end{eqnarray*}
By Theorem~\ref{thm-S-inf}, the last term of the previous chain of inclusions is
equal to $\bigcap_{i=1}^r Q_i~=~I$.
\end{proof}

\begin{thm}\label{cota-rad}
Let $I=\langle f_1,\ldots,f_r\rangle\subseteq\Kvars$ be an ideal with $\deg(f_i)\leq \delta$ for $i=1,\ldots,r$,
and let $f\in\sqrt{I}$ with $\deg(f)\leq \delta$. Then there exists $g_1,\ldots,g_r,g\in\K[t,x_1,\ldots,x_n]$ such that
$1=g_1f_1+\cdots+f_rg_r+(1-tf)g$ with $\deg(g_i)$ and $\deg(g)$ bounded above by $\max\{3,\delta+1\}^{n+1}$.
\end{thm}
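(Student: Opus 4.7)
The plan is to combine the Rabinowitsch trick with an effective Nullstellensatz of Kollár type (not the Hermann-style bound of Theorem~\ref{cota}) to produce a certificate of the required shape and with the stated degree bound. A direct application of Theorem~\ref{cota} in $n+1$ variables would produce a doubly exponential bound in $n$, whereas the target $\max\{3,\delta+1\}^{n+1}$ is only singly exponential, so a finer effective Nullstellensatz is needed.

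First, I would form the ideal $J=\langle f_1,\ldots,f_r,\,1-tf\rangle$ in $\K[t,x_1,\ldots,x_n]$. Any common zero $(\tau,p)\in\K^{n+1}$ of its generators would satisfy $p\in V(I)$ together with $\tau f(p)=1$; but $f\in\sqrt{I}=I(V(I))$ forces $f(p)=0$, a contradiction. Hence $V(J)=\emptyset$, and Hilbert's Nullstellensatz yields $1\in J$, i.e.\ an identity of the required form
\[
1 = g_1 f_1 + \cdots + g_r f_r + (1-tf)\,g,
\]
for some $g_1,\ldots,g_r,g\in\K[t,x_1,\ldots,x_n]$.

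Second, to bound the degrees, I would invoke Kollár's effective Nullstellensatz: if $h_1,\ldots,h_s\in\K[y_1,\ldots,y_N]$ have no common zero in $\K^N$ and $\deg(h_i)\leq D$, then the certificate $1=\sum G_i h_i$ can be chosen with $\deg(G_ih_i)\leq\max\{3,D\}^N$. I would apply this with $N=n+1$ and $D=\max\{\delta,\deg(1-tf)\}\leq\delta+1$ (using the hypothesis $\deg(f)\leq\delta$); the resulting bound $\max\{3,\delta+1\}^{n+1}$ on each $\deg(g_if_i)$ and $\deg((1-tf)g)$ immediately yields the same bound on $\deg(g_i)$ and $\deg(g)$, matching the assertion.

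The only nontrivial step is choosing the right effective Nullstellensatz; the Rabinowitsch reduction and the degree bookkeeping are routine. The main obstacle is therefore conceptual rather than technical: one must resist the temptation to apply Theorem~\ref{cota} in $n+1$ variables (which would give the weaker bound $2((r+1)(\delta+1))^{2^n}$) and instead import Kollár's sharper result to achieve the singly exponential dependence on $n$ claimed in the statement.
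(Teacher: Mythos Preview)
Your argument is correct and is essentially what the paper does: it simply cites Koll\'ar's effective Nullstellensatz (with Fitchas--Galligo, Jelonek, and Sombra as alternatives), and your write-up just makes explicit the standard Rabinowitsch reduction that underlies that citation. The degree bookkeeping---$N=n+1$ variables, maximal input degree $\max\{\delta,\deg(1-tf)\}\leq\delta+1$, hence bound $\max\{3,\delta+1\}^{n+1}$---matches the statement exactly.
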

\begin{proof}
See \cite[Thm.~1.5]{Kollar}. See also \cite{FitchGalligo}, \cite[Thm.~1.1]{Jel05} and~\cite[Thm.~1]{Sombra}
for an alternative proof.
\end{proof}

A similar conclusion to Theorem~\ref{cota-rad} can be obtained from Theorem~\ref{cota}, but with worse bound.
Although any finite bound would have been enough to show the following theorem, we included it here since it gives
an idea of the size of the linear algebra problem involved in the proof.

\begin{proof}[Proof of Theorem~\ref{thm-2}(a).]
$(\subseteq)$: Trivial. $(\supseteq)$:
Assume that $I=\langle f_1,\ldots,f_r \rangle$ with $f_i\in\Kvars$.
Take $f\in \bigcap_{\alpha\in S}\sqrt{I+ \langle x_1-\alpha \rangle}$ and let $\delta=\max\{\deg(f),\ \deg_{i=1,\ldots,r} (f_i)\}$.
Define $C=\max\{3,\delta+1\}^{n+1}$ the constant of Theorem~\ref{cota-rad}.
For all $\alpha \in S$, the linear system $1=f_1g_1+\ldots+f_rg_r+(x_1-\alpha)h+ (1-tf)g$ with $\deg(g),\deg(h),\deg(g_i)\leq C$
is compatible in $\K$, i.e. there are $g_1,\ldots,g_r,g,h\in \K[x_1,\ldots,x_n]$, that depend on $\alpha$, such that
$1=f_1g_1+\ldots+f_rg_r+(x_1-\alpha)h+(1-tf)g$. By Lemma~\ref{alg-lin}, the system is also compatible over $\K(\alpha)$ where
$\alpha$ is regarded as an indeterminate. This means that, in the expression above, $g_1,\ldots,g_r,h,g$ can be taken in
$\K(\alpha)[t,x_1,\ldots,x_n]$.
Multiplying by $\omega(\alpha)$ in order to clear denominators, we get
$$\omega(\alpha)=f_1\bar{g}_1+\ldots+g_r\bar{g}_r+(x_1-t)\bar{h}+(1-tf)\bar{g}$$
where $\bar{g}_1,\ldots,\bar{g}_r,\bar{h},\bar{g}\in\K[\alpha,t,x_1,\ldots,x_n]$.
Susbtituting $\alpha=x_1$, we get
$$\omega(x_1)=f_1\tilde{g}_1+\ldots+f_r\tilde{g}_r+(1-tf)\tilde{g}$$
where $\tilde{g}_1,\ldots,\tilde{g}_r,\tilde{g}\in\K[t,x_1,\ldots,x_n]$.
Finally, substituting $t=\frac{1}{f}$ and removing denominators by multiplying by a large enough power of $f$,
we obtain $f^N\omega(x_1)\in I$, which implies that $f\omega(x_1)\in\sqrt{I}$.
Since $\sqrt{I}= P_1\cap\cdots\cap P_s$ with $P_i$ prime and $P_i\cap\K[x_1]= {0}$, we have that
$\omega(x_1)\notin P_i$ and therefore $f\in P_i$ for all $i$.
\end{proof}

\section{Recovering an ideal from finitely many cross sections}

Let $I=\langle f_1, \ldots, f_r\rangle\subseteq \Kvars$ be an ideal and let $\alpha\in\K$.
Throughout this section we will use the following notation:
$$I|_{x_1=\alpha}=\langle f_1|_{x_1=\alpha}, \ldots, f_r|_{x_1=\alpha}\rangle\subseteq \K[x_2,\ldots, x_n].$$

\begin{thm}\label{sueno2}
Let $I\subseteq\Kvars$ be an ideal such that:
\begin{itemize}
 \item $V(I)$ is equidimensional.
 \item $V(I)$ has no irreducible component contained in a hyperplane $\{x_1=\alpha\}$.
\end{itemize}

Let $f\in\Kvars$ with $\deg(f)\leq d$. Then

$$ f\in \sqrt{I}\Leftrightarrow f|_{x_1=\alpha}\in\sqrt{I|_{x_1=\alpha}}$$
for all $\alpha\in S$ with $|S|>(d+1)\deg(V(I))$.
\end{thm}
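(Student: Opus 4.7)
The forward implication is immediate from the Nullstellensatz. For the converse I would argue by contradiction: assume $f \notin \sqrt{I}$. Decompose $\sqrt{I} = \bigcap_i P_i$ with $W_i = V(P_i)$ irreducible of common dimension $k$ (by equidimensionality), pick an index $i$ with $f \notin P_i$, and write $W = W_i$ and $D = \deg W \leq \deg V(I)$. The hypothesis together with the Nullstellensatz yields $W \cap \{x_1 = \alpha\} \subseteq V(f)$ for every $\alpha \in S$, and the goal becomes to show that the set of such $\alpha$ has at most $(d+1) D$ elements.

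Since $f$ does not vanish on $W$, Krull's Hauptidealsatz makes $V(f) \cap W$ pure of dimension $k-1$, and Bezout's inequality gives $\deg(V(f) \cap W) \leq d D$. I would split $S = S_1 \sqcup S_2$ according to whether $W \cap \{x_1 = \alpha\}$ is non-empty or empty. For $\alpha \in S_1$, $W \cap \{x_1 = \alpha\}$ is pure of dimension $k-1$ (Krull again) and lies in $V(f) \cap W$, so each of its irreducible components is an irreducible component of $V(f) \cap W$; components coming from distinct $\alpha \in S_1$ are distinct because they sit in distinct hyperplanes. Hence $|S_1|$ is at most the number of irreducible components of $V(f) \cap W$, which is at most $\deg(V(f) \cap W) \leq dD$.

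The main obstacle is bounding $|S_2|$, i.e.\ the size of $E := \{\alpha \in \K : W \cap \{x_1 = \alpha\} = \emptyset\}$. My plan is to show $|E| \leq D$ by reducing to the case of curves. Cutting $W$ with $k-1$ generic linear forms in $x_2, \ldots, x_n$ produces, by Bertini, an irreducible affine curve $C$ of degree $D$ that is not contained in any hyperplane $\{x_1 = c\}$ and satisfies $E \subseteq E_C$, where $E_C$ is the analogous set for $C$. The polynomial $\prod_{\alpha \in E_C}(x_1 - \alpha)$ is nowhere zero on $C$, hence a unit in $\K[C]$. Passing to the normalization $\tilde C$ and using the standard sequence relating the units of a smooth affine curve to divisors supported on the points missing from its projective completion, $\K[\tilde C]^*/\K^*$ injects into a free abelian group whose rank equals the number of those missing points. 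That number is in turn bounded by the sum of analytic branches of $\bar C$ at the points of $\bar C \cap \{X_0 = 0\}$, and this sum is at most $\deg \bar C = D$ by projective Bezout. Finally, the classes of $x_1 - \alpha$ for distinct $\alpha \in E_C$ are linearly independent in $\K[\tilde C]^*/\K^*$, since any multiplicative relation among them would descend to a non-trivial polynomial identity in $\K[x_1]$. Therefore $|E| \leq |E_C| \leq D$, and $|S| \leq (d+1)D \leq (d+1)\deg V(I)$, the desired contradiction.
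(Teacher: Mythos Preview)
Your argument is correct and follows a genuinely different route from the paper's. The paper proceeds by induction on $\dim V(I)$: the base case (curves) counts intersection points of $f$ with each irreducible curve $\CC_i$ via B\'ezout, after noting that $\CC_i\cap\{x_1=\alpha\}$ can be empty for at most $\deg(V(I))$ values of $\alpha$; the inductive step slices by generic hyperplanes $\{x_2=\beta\}$, applies the induction hypothesis to each slice, and then invokes Theorem~\ref{thm-2}(a) (the infinite--cross-section reconstruction) to climb back up. Your proof instead fixes a single irreducible component $W$ on which $f$ does not vanish and bounds $|S|$ directly: for the nonempty slices you count $(k-1)$--dimensional components of $V(f)\cap W$ using the refined B\'ezout inequality, and for the empty slices you reduce to a curve and bound $|E_C|$ via the unit group of its affine coordinate ring. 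Your approach is self-contained---it avoids the dependence on Theorem~\ref{thm-2}(a) and on the effective Nullstellensatz bounds behind it---and your unit-group argument in fact supplies a clean justification for the very step the paper leaves terse in its base case (the assertion that at most $\deg(V(I))$ values of $\alpha$ give an empty affine slice). The paper's route is shorter because it recycles earlier machinery, at the cost of that dependency.

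One minor remark: you cut $W$ by linear forms in $x_2,\ldots,x_n$ only, which is a constrained linear system, so Bertini does not immediately guarantee irreducibility of $C$. This is harmless: you only need that $x_1$ is non-constant on some irreducible component $C_1$ of $C$ (which holds for a generic choice since $x_1$ is non-constant on $W$), and then $|E|\leq |E_C|\leq |E_{C_1}|\leq \deg C_1\leq \deg C = D$ by your units argument applied to $C_1$.
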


\begin{proof}
($\Rightarrow$): Trivial. ($\Leftarrow$): We proceed by induction in $\dim(V(I))$.

 \begin{itemize}
  \item Case $\dim(V(I))=1$: We have that $V(I)$ is a union of irreducible curves $\CC_1\cup\cdots\cup\CC_m$.
Our assumptions imply that $f$ vanishes at $V(I)\cap\{x_1=\alpha\}$ for all $\alpha\in S$, and in
particular, $f$ vanishes at $\CC_i\cap\{x_1=\alpha\}$ for all $\alpha\in S$ and $i=1,\ldots,m$.
We know that
$|\CC_i\cap \{x_1= \alpha\}|\geq 1$ for all $\alpha$ except maybe for those values where the compactification
of $\CC_i$ in $\PP^n$ intersects the hyperplane $\{x_1=\alpha\}$ at infinity. Since there are at most
$\deg(V(I))$ of such points, we have $|V(f)\cap\CC_i|>\deg(V(I))d$. By Bezout's Theorem (see~\cite[Thm.~2.1]{Vogel}),
we have that either $|V(f)\cap\CC_i|\leq \deg(V(f))d$ or $f$ vanishes at $\CC_i$. We have shown above that the former
cannot happen, so we conclude that $f\in I(\CC_i)$ for all $i=1,\ldots,m$. Therefore $f\in\sqrt{I}$.

\bigskip

\item Case $\dim(V(I))=e>1$: Assume the theorem is true for $\dim(V(I))\leq e-1$.
Without loss of generality we can assume that, after a suitable linear change of coordinates, there exist an infinite
set $\Omega\subseteq\K$ such that the ideals $I|_{x_2=\beta}$ satisfy:
\begin{itemize}
 \item $\deg(V(I))=\deg(V(I|_{x_2=\beta}))$,
 \item $V(I|_{x_2=\beta})$ is equidimensional,
 \item $\dim (V(I|_{x_2=\beta}))=\dim V(I)-1\geq 1$,
 \item $V(I|_{x_2=\beta})$ has no irreducible component contained in a hyperplane $\{x_1=\alpha\}$,
\end{itemize}
for all $\beta\in\Omega$.
In particular, the ideals $I|_{x_2=\beta}$ satisfy the induction hypothesis with
$\dim(V(I|_{x_2=\beta}))=e-1$.
If $f|_{x_1=\alpha}\in\sqrt{I|_{x_1=\alpha}}$ for $\alpha\in S$ with $|S|>(d+1)\deg(V(I))$, then
we also have that $f|_{x_1=\alpha,\ x_2=\beta}\in\sqrt{I|_{x_1=\alpha,\ x_2=\beta}}$.
Consequently, $f|_{x_2=\beta}\in\sqrt{I|_{x_2=\beta}}$ for all $\beta\in \Omega$.
By Theorem~\ref{thm-2}(a), we conclude that $f\in \bigcap_{\beta\in S}\sqrt{I+\langle x_2-\beta\rangle}=\sqrt{I}$.
\end{itemize}
\end{proof}

Now Theorem~\ref{thm-3}(a) follows immediately as a corollary.

\begin{proof}[Proof of Theorem~\ref{thm-3}(a).]
Our assumptions imply that $$V(I)=V=V_1\cap V_2\cap \ldots V_e$$ where $e=\dim V$ and $V_i$ are equidimensional varieties
of dimension $i$, neither of them included in a hyperplane $\{x_1=\alpha\}$. The following diagram holds:

 \vspace{0.25cm}\hspace{4cm}
 \xymatrix@C=1pc@R=1pc
{f|_{V}\equiv 0 \ar@{<=>}[dd]_{\forall i} \ar@{<=>}[rr]
 & & f|_{V\cap \{x_1=\alpha\}}\equiv 0\\ \\
 f|_{V_i}\equiv 0\ar@{<=>}[rr]^{(\ast)\;\quad}& & f|_{V_i\cap \{x_1=\alpha\}}\equiv 0\ar@{<=>}_{\forall i\atop\forall\alpha\in S}[uu]}
\vspace{0.25cm}

The arrow $(\ast)$ follows from Theorem~\ref{sueno2}. By the Nullstellensatz, the arrow on top is equivalent to say that
$f\in I\Longleftrightarrow f\in\sqrt{I+\langle x_1-\alpha\rangle}\;\forall\,\alpha\in S$.
\end{proof}

In the algebraic case, we proceed as in the proof of Theorem~\ref{thm-2}(b), but keeping track of the bounds of
the degrees.

\begin{proof}[Proof of Theorem~\ref{thm-3}(b).]
Assume that $I=\bigcap_{i=1}^lQ_i$ where $Q_i$ are primary ideals with $Q_i\cap\K[x_1]=\{0\}$.
By Theorem~\ref{cota}, we have that $f\in I$ if and only if there exist $g_1,\ldots,g_r\in\Kvars$ with
$\deg(g_i)\leq d+2(\delta r)^{2^{n-1}}$ such that $f=f_1g_1+\cdots+f_rg_r$. This equation can be regarded as a
linear system of equations in $\K[x_1]$:
$$f\in I\Longleftrightarrow A(x_1) G = b(x_1),$$
where $A(x_1)$ and $b(x_1)$ are matrices whose entries are coefficients of $f_1,\ldots,f_r$ and $f$
respectively. The unknowns are the coefficients of $g$, represented by the vector $G$.
Therefore $f\in I$ if and only if that system is compatible in $\K(x_1)$. By Lemma~\ref{alg-lin}, that
system is compatible if and only if the system $(A(\alpha)|b(\alpha))$ is compatible for $\alpha\in S$
with $|S|>\max\{d,\delta \}\max\{{\rm rows}(A), {\rm cols}(A)+1\}$. Using Theorem~\ref{cota} again, each of those systems are
compatible if and only if $f|_{x_1=\alpha}\in I|_{x_1=\alpha}$, or equivalently, $f\in I+\langle x_1-\alpha\rangle$. The result
follows by counting the number of rows and columns of $A$: ${\rm rows}(A)\leq d^n$ and
${\rm cols}(A)\leq\left(d+2(\delta r)^{2^{n-1}}\right)^n$.
\end{proof}

\section{Principal ideals}

\begin{remark}\label{remfacil}
 Let $I=\langle f\rangle\subseteq\Kvars$ be a principal ideal, and let $J=I+\langle x_1-\alpha\rangle$ with $\alpha\in\K$.
 Then $J\cap \K[x_2,\ldots,x_n] = \langle f(\alpha,x_2,\ldots,x_n)\rangle$ in $\K[x_2,\ldots,x_n]$.
\end{remark}

\begin{proof}[Proof of Theorem~\ref{thm-4}.]
Throughout this proof we will write $x=x_1$ and $y=(x_2,\ldots,x_n)$. We will order the monomials in $y$
using the graded lexicographic order $x_2>x_3>\cdots>x_n$.
Let $$f=\sum_{i\,:\,|i|\leq d}a_i(x)y^i$$ where $i=(i_2,\ldots,i_n)$ and $y^i=x_2^{i_2}\cdots x_n^{i_n}$.
By Remark~\ref{remfacil}, for any $k=1,\ldots,2d$, we have that
$$I_k\cap\K[y]=\langle f,x-\alpha_k \rangle\cap\K[y]= \langle g_k\rangle$$
where $g_k=\lambda_k f(\alpha_k, y)$ has leading coefficient $1$ and $\lambda_k\in\K^\ast$.
The following identities show that it is possible to recover ${\rm multideg}_y(f)$ from the $g_k$:
\begin{eqnarray*}
e = {\rm multideg}_y(f) & = & \max\{i\,:\,a_i(x)\neq 0\} = \\
& \overset{(\ast)}{=} & \max\{i\,:\,a_i(\alpha_k)\neq 0\;\text{for some}\;k\} = \\
& = & \max_{k=1,\ldots,2d}\left(\max\{i\,:\,a_i(\alpha_k)\neq 0\}\right) = \\
& = & \max_{k=1,\ldots,2d}{\rm multideg}_y(f(\alpha_k,y))=\\
& = & \max_{k=1,\ldots,2d}{\rm multideg}_y(g_k).
\end{eqnarray*}
$(\ast)$: This equality is true since $\deg(a_i)\leq d-|i|<2d$.

Now we know that $f=\sum_{i\leq e}a_i(x)y^i$ with $a_e\neq 0$. Since $f\not\in\K[x]$, then $|e|\geq 1$.
The polynomial $a_e(x)$ vanishes in exactly $r\leq d-|e|$ points in $\{\alpha_1,\ldots,\alpha_{2d}\}$.
Without loss of generality we can assume that $a_e(\alpha_{2d-r+1})=\cdots=a_e(\alpha_{2d})=0$, i.e.
\begin{equation}\label{formula1}
 a_e(x)=\tilde{a}_e(x)\cdot\prod_{l=2d-r+1}^{2d}(x-\alpha_l)
\end{equation}
where $\tilde{a}_e\in\K[x]$ has degree at most $d-|e|-r$. Since the polynomials $g_k=\lambda_k f(\alpha_k,y)$
have leading coefficient $1$, then $\lambda_k=\frac{1}{a_e(\alpha_k)}$ for $k=1,\ldots,2d-r$. In particular,
the coefficients of $g_k$, which are all known, are equal to $\frac{a_i(\alpha_k)}{a_e(\alpha_k)}$ for
$1\leq k\leq 2d-r$ and $0\leq i\leq e$. Combining this with Eq.(\ref{formula1}), we can obtain the following
fractions:
$$\frac{a_i(\alpha_k)}{\tilde{a}_e(\alpha_k)} = \frac{a_i(\alpha_k)}{a_e(\alpha_k)}
  \cdot\prod_{l=2d-r+1}^{2d}(\alpha_k-\alpha_l).$$
Since $\deg(a_i)\leq d-|i|$ and $\deg(\tilde{a}_e)\leq d-|e|-r$, it is possible to reconstruct the rational
function $\frac{a_i(x)}{\tilde{a}_e(x)}$ from the $2d-r\geq 2d-|i|-|e|-r+1$ points $\alpha_1,\ldots,\alpha_{2d-r}$
using rational interpolation.
\end{proof}

The following example shows that $2d-1$ cross sections are not enough.

\begin{example}\label{ex1}
Consider the polynomials $f=p(x)y+1$ and $g=a(x)y+b(x)$, where $p(x)=x^d$, $a(x)=-x^{d-1}+2^{2d-1}$, and $b(x)=x^d-1$.
The ideals $I=\langle f\rangle$ and $J=\langle g\rangle$ are both principal, generated by
polynomials of degree $d$, and we clearly have $I\neq J$. Let $\alpha_i=2\xi_{2d-1}^i$ where $\xi_{2d-1}\in\C$ is
a primitive $(2d-1)$-th root of unity. Let us see that the ideals $I+\langle x-\alpha_i\rangle$ and
$J+\langle x-\alpha_i\rangle$ are equal for $i=1,\ldots,2d-1$. Indeed, a simple computation shows that
$(2^d\xi_{2d-1}^{id}-1)f(\alpha_i,y)=g(\alpha_i,y)$.
\end{example}

\section*{Acknowledgements}

We thank Jessica Aliaga Lavrijsen, Jos\'e Ignacio Cogolludo-Agust\'\i n, \'Alvaro Lozano-Rojo and Silvia
Vilari\~no Fern\'andez for many valuable comments
and corrections.

\end{document}